\documentclass[11pt,reqno]{amsart}


\usepackage{amsmath,amsthm,amssymb}
\usepackage{amssymb}

\usepackage{graphics,graphicx}
\usepackage{hyperref}
\usepackage[usenames, dvipsnames]{xcolor}

\definecolor{darkblue}{rgb}{0.0,0.0,0.3}
\hypersetup{colorlinks,breaklinks,
  linkcolor=darkblue,urlcolor=darkblue,
anchorcolor=darkblue,citecolor=darkblue}

\usepackage[square,sort,comma,numbers]{natbib}
\usepackage{fancyvrb}
\usepackage{enumitem}

\theoremstyle{plain}
\newtheorem{theorem}{Theorem}
\newtheorem*{theorem*}{Theorem}
\newtheorem{lemma}[theorem]{Lemma}

\newtheorem*{proposition*}{Proposition}
\newtheorem{corollary}[theorem]{Corollary}
\newtheorem*{corollary*}{Corollary}

\theoremstyle{definition}
\newtheorem{remark}[theorem]{Remark}



\DeclareMathOperator*{\Vol}{Vol}

\title{A Shifted Sum for the Congruent Number Problem}
\author[Hulse]{Thomas A. Hulse}
\author[Kuan]{Chan Ieong Kuan}
\author[Lowry-Duda]{David Lowry-Duda}
\author[Walker]{Alexander Walker}

\begin{document}

\begin{abstract}
  We introduce a shifted convolution sum that is parametrized by the squarefree
  natural number $t$.
  The asymptotic growth of this series depends explicitly on whether or not $t$ is a
  \emph{congruent number}, an integer that is the area of a rational right triangle. This series presents a new avenue of inquiry for The Congruent Number Problem. 
\end{abstract}

\maketitle

A \emph{congruent} number is an integer which appears as the area of a right triangle
with rational-length sides.
The Congruent Number Problem is the classification problem of determining which integers are congruent numbers.
For example, it is known that $5$, $6$ and $7$ are congruent numbers but that
$1$, $2$, $3$, and $4$ are not.
(For more background on the Congruent Number Problem, see the excellent survey
written by Conrad~\cite{conrad}).

The observation that scaling the side lengths of a triangle by $\lambda$ scales its area by $\lambda^2$ reduces the Congruent Number Problem to that of determining which \emph{squarefree} integers are congruent.  Restricting to this case, we see by another rescaling that $t$ (squarefree) is congruent if and only it appears as the squarefree part of the area of a right triangle with \emph{integral sides}.

The Pythagorean identity, $a^2 + b^2 = c^2$, shows that common divisors of
any two sides of a right triangle must also divide the third side, and
thus we can further assume that such an integer-sided triangle is
\emph{primitive}, i.e.\ that the side lengths are pairwise coprime.
We also note the classical observation that the area, $ab/2$, of a primitive right
triangle is an integer, as one of the legs must have even length.

The main object is this paper is the following theorem, which relates a shifted
sum of arithmetic functions to the hypotenuses of primitive right triangles
with squarefree part of the area $t$.

\begin{theorem}\label{theorem:one}
Let $t \in \mathbb{N}$ be squarefree, and let $\tau: \mathbb{Z} \to \{0,1\}$,
where $\tau(n)=1$ if $n$ is a square and $\tau(n)=0$ otherwise.
Let $r$ be the rank of the elliptic curve $E_t: y^2 = x^3-t^2x$ over
$\mathbb{Q}$.
For $X > 1$, define the shifted partial sum
\begin{equation*}
  S_t(X): = \sum_{m=1}^{X} \sum_{n=1}^{X} \tau(m+n) \tau(m-n) \tau(m) \tau(tn).
\end{equation*}
Then this sum has the asymptotic expansion
\begin{equation*}
  S_t(X) =  C_t X^{\frac{1}{2}} +O_t((\log X)^{r/2}),
\end{equation*}
in which
\begin{equation*}
  C_t := \sum_{h_i \in \mathcal{H}(t)} \frac{1}{h_i}
\end{equation*}
is the convergent sum over $\mathcal{H}(t)$, the set of hypotenuses, $h_i$, of
dissimilar primitive right triangles with squarefree part of the area $t$.
\end{theorem}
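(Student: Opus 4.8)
The plan is to first convert $S_t(X)$ into a lattice-point count over right triangles by an elementary substitution, and to bring in the arithmetic of $E_t$ only for the error term. Suppose $1 \le m, n \le X$ contributes to $S_t(X)$, so $m = a^2$, $m + n = p^2$, $m - n = q^2$, and $tn = d^2$ for nonnegative integers $a,p,q,d$. Since $t$ is squarefree, $tn$ being a square forces $t \mid n$, hence $n = te^2$ with $n \ge 1$. From $2m = p^2 + q^2$ and $2n = p^2 - q^2 \ge 2$ we get $p \equiv q \pmod 2$ and $p > q \ge 0$, so $P := \tfrac12(p+q)$ and $Q := \tfrac12(p-q)$ are integers with $m = P^2 + Q^2$ and $n = 2PQ$; moreover $1 \le Q < P$, since $n = 2PQ \ge 1$ forces $Q \ge 1$ while $q = 0$ would force $P = Q$ and hence $m = 2Q^2$, incompatible with $m = a^2$. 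Now $\tau(m) = 1$ says precisely that $(Q,P,a)$ with $a^2 = P^2 + Q^2$ is an integer-sided right triangle, and — invoking squarefreeness of $t$ again — $\tau(tn) = 1$ says precisely that $2tPQ = 4t\cdot\tfrac{PQ}{2}$ is a square, i.e.\ that the area $\tfrac{PQ}{2}$ of this triangle has squarefree part $t$. Running this backwards, every integer right triangle with legs $Q < P$, hypotenuse $a$, and squarefree part of the area $t$ produces $(m,n) = (P^2 + Q^2, 2PQ)$ with all four $\tau$-factors equal to $1$; and since $n = 2PQ \le P^2 + Q^2 = m$, the constraint $m, n \le X$ is equivalent to $a \le X^{1/2}$. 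Hence
$$ S_t(X) \;=\; \#\{\, \text{integer right triangles with squarefree part of the area } t \text{ and hypotenuse } \le X^{1/2} \,\}. $$

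Every integer right triangle is $g$ times a primitive one for a unique $g \ge 1$, which scales the area by $g^2$ and therefore preserves its squarefree part. Stratifying by $g$,
$$ S_t(X) \;=\; \sum_{h_i \in \mathcal{H}(t)} \Bigl\lfloor \frac{X^{1/2}}{h_i} \Bigr\rfloor \;=\; C_t\, X^{1/2} \;-\; X^{1/2}\!\!\sum_{h_i > X^{1/2}} \frac{1}{h_i} \;-\!\! \sum_{h_i \le X^{1/2}} \Bigl\{ \frac{X^{1/2}}{h_i} \Bigr\}. $$
So the theorem reduces to two facts: that $C_t$ converges, and that $N(Y) := \#\{ h_i \in \mathcal{H}(t) : h_i \le Y \} \ll_t (\log Y)^{r/2}$. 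Granting the latter: the fractional-part sum is $\le N(X^{1/2}) \ll (\log X)^{r/2}$, a dyadic decomposition gives $X^{1/2}\sum_{h_i > X^{1/2}} h_i^{-1} \ll (\log X)^{r/2}$, and since $N(Y) = o(Y)$ one has $C_t = \sum_i h_i^{-1} < \infty$; together these give the claimed asymptotic.

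The remaining, genuinely arithmetic, point is $N(Y) \ll_t (\log Y)^{r/2}$. Here I would use the classical correspondence (see Conrad's survey~\cite{conrad}) between rational right triangles of area $t$ and rational points on $E_t : y^2 = x^3 - t^2 x$. A primitive integer triangle with squarefree part of the area $t$ and hypotenuse $h_i$ comes from clearing denominators in a rational right triangle of area $t$ whose sides have height $\ll_t h_i$; the associated point on $E_t$ then has naive height $\ll_t h_i^{O(1)}$, the correspondence is at most boundedly-many-to-one, and no torsion point (all of which have $y = 0$) lies in its image. So $N(Y)$ is at most the number of rational points of $E_t$ of naive height $\le Y^{O(1)}$. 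By Mordell--Weil, $E_t(\mathbb{Q})$ has rank $r$, and the N\'eron--Tate height is a positive-definite quadratic form on $E_t(\mathbb{Q})/\mathrm{tors} \cong \mathbb{Z}^r$ agreeing with $\tfrac12\log H(\cdot)$ up to $O_t(1)$; counting lattice points of this form inside a Euclidean ball of radius $\ll_t (\log Y)^{1/2}$ gives $N(Y) \ll_t (\log Y)^{r/2}$, as required.

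The elementary reduction is the bulk of the write-up but is only bookkeeping — the parity step, the exclusion of degenerate triangles, and the two appearances of the squarefree hypothesis must be handled with care. The one nontrivial ingredient is the estimate $N(Y) \ll (\log Y)^{r/2}$, which packages Mordell--Weil, the theory of canonical heights, and the explicit triangle--point dictionary for $E_t$. I expect no obstacle in the main term: it is exactly $C_t X^{1/2}$, it needs only the bijection and not the rank, and $r$ enters solely through the size of the error.
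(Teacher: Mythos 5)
Your proposal is correct and takes essentially the same route as the paper: both reduce $S_t(X)$ to $\sum_{h_i \le X^{1/2}} \lfloor X^{1/2}/h_i \rfloor$ by identifying contributing pairs with integer right triangles whose area has squarefree part $t$, and both obtain the error term by transporting hypotenuses to rational points of bounded naive height on $E_t$ and invoking the $(\log Y)^{r/2}$ point count (which the paper cites as N\'eron's theorem via Knapp rather than re-deriving from the canonical height). The only cosmetic differences are that you parametrize contributing pairs directly as $(m,n)=(P^2+Q^2,\,2PQ)$ instead of first passing to the auxiliary sum over $(m^2, tn^2)$, and that you sketch the lattice-point argument behind the height-counting theorem yourself.
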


Note that $C_t \neq 0$ if and only if $t$ is a squarefree congruent number.
The sum $S_t(X)$ detects 3-term arithmetic progressions of squares whose common
difference has squarefree part $t$.
There is a well-known one-to-one correspondence between 3-term arithmetic
progressions of squares with common difference $t$ and right triangles with area
$t$, given by
\begin{align*}
  \big\{
    (\alpha, \beta, \gamma): \beta^2 - \alpha^2 = t = \gamma^2 - \beta^2
  \big\}
  \leftrightarrow&
  \big\{ (a,b,c): a^2 + b^2 = c^2, \; ab/2 = t \big\}
  \\
  (\alpha, \beta, \gamma) \mapsto (\gamma-\alpha, \gamma + \alpha, 2 \beta),
  \quad&
  (a,b,c) \mapsto \big( \tfrac{b-a}{2}, \tfrac{c}{2}, \tfrac{b+a}{2} \big).
\end{align*}
We adopt the convention that $a<b<c$ for a Pythagorean triple $(a,b,c)$. 
Using the above correspondence, it's straightforward to see that $S_t(X)$ is nonzero
for sufficiently large $X$ if and only if $t$ is congruent.
But the understanding of the main term and separation from the error term in
Theorem~\ref{theorem:one} are new.

Relating the Congruent Number Problem to the sum $S_t(X)$ is of specific
interest to the authors and their ongoing investigation of $S_t(X)$ (and
analogous shifted sums) via spectral expansions arising from shifted multiple
Dirichlet series associated to modular forms.
Adapting the recent methods for studying double shifted sums
from~\cite{hkldw1, HKLW5} and for studying triple shifted sums in~\cite{Tom},
it should be possible to directly study sums similar to $S_t(X)$ through
spectral methods of automorphic forms.
Such a path seems an unstudied approach to the Congruent Number Problem.

\section{Proof of Main Theorem}

The remainder of this paper is dedicated to proving Theorem~\ref{theorem:one}.
The overall idea is to first study the bijection between primitive Pythagorean
triples and 3-term arithmetic progressions of squares, and relate $S_t(X)$ to
sums of reciprocals hypotenuses of right triangles through this bijection.
We then use a bijection between primitive Pythagorean triples and points on
elliptic curves, and show how to control the error term in
Theorem~\ref{theorem:one} through estimates of height functions on these
elliptic curves.

\subsection*{Primitive Pythagorean triples and progressions of squares.}
We begin with three lemmata that establish the connection between Pythagorean
triples and arithmetic progressions of squares.

\begin{lemma}\label{lem:bijection}
  The set of primitive Pythagorean triples $(a,b,c)$, where $a<b<c$, with squarefree part of the area
  equal to $t$, and the set of positive coprime pairs $(m, n)$, for which $m^2 + tn^2$ and
  $m^2 - tn^2$ are both squares, are in bijection. The bijective maps are
  \begin{align}
    (a,b,c) &\mapsto (c, \sqrt{2ab/t})=(m,n), \label{hypo} \\
    (m,n) &\mapsto \left(
      \tfrac{\sqrt{m^2+tn^2}-\sqrt{m^2-tn^2}}{2}
      \cdot
      \tfrac{\sqrt{m^2+tn^2}+\sqrt{m^2-tn^2}}{2}, m
    \right) = (a,b,c). \label{line:reverse_map}
  \end{align}
\end{lemma}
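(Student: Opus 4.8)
\medskip
\noindent\textbf{Proof proposal.}
The plan is to verify that each of the two displayed maps sends its source set into the stated target set, and then to check by direct substitution that the two composites are the respective identity maps; injectivity and surjectivity of each map then follow formally. Everything rests on two pairs of algebraic identities: $c^2 \pm 2ab = (a \pm b)^2$, valid whenever $a^2 + b^2 = c^2$, and, writing $s := \sqrt{m^2 + tn^2}$ and $d := \sqrt{m^2 - tn^2}$, the identities $s^2 + d^2 = 2m^2$ and $s^2 - d^2 = 2tn^2$.

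I would first treat the map \eqref{hypo}. Given a primitive triple $(a,b,c)$ with $a < b < c$, the area $ab/2$ is a positive integer (one leg is even, as recalled above) whose squarefree part is $t$, so $ab/2 = tk^2$ for some $k \in \mathbb{N}$; then $2ab/t = 4k^2$ and $n := \sqrt{2ab/t} = 2k \in \mathbb{N}$ with $tn^2 = 2ab$. Consequently $m^2 + tn^2 = c^2 + 2ab = (a+b)^2$ and $m^2 - tn^2 = c^2 - 2ab = (b-a)^2$ are squares. To see $\gcd(m,n) = 1$, note that a common prime divisor $p$ would divide $c = m$, hence be odd ($c$ is odd for a primitive triple), and would divide $(a+b)^2$ and $(b-a)^2$, hence $a+b$ and $b-a$, hence $2a$ and $2b$, hence $a$ and $b$ — contradicting primitivity.

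The step I expect to be delicate is checking that the map \eqref{line:reverse_map} lands in the correct set, since this is where coprimality of $(m,n)$ and squarefreeness of $t$ must be exploited, through a $2$-adic valuation argument. Given a coprime pair $(m,n)$ with $m^2 + tn^2 = s^2$ and $m^2 - tn^2 = d^2$, one has $d \geq 1$ (otherwise $2m^2 = s^2$, impossible) and $s > d$; since $s^2 + d^2 = 2m^2$ is even we get $s \equiv d \pmod 2$, so $a := (s-d)/2$, $b := (s+d)/2$, $c := m$ are positive integers with $a^2 + b^2 = (s^2+d^2)/2 = m^2 = c^2$, and $a < b < c$ follows from $d \geq 1$ together with $sd = \sqrt{m^4 - t^2n^4} < m^2$. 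For primitivity, if an odd prime $p$ divided $\gcd(a,b)$ it would divide $s$ and $d$, so $p^2 \mid s^2+d^2 = 2m^2$ (giving $p \mid m$, hence $p \nmid n$) and $p^2 \mid s^2-d^2 = 2tn^2$ (then giving $p^2 \mid t$) — impossible; and $2 \mid \gcd(a,b)$ would give $4 \mid s^2$ and $4 \mid d^2$, hence $2 \mid m$, hence $n$ odd, hence $4 \mid tn^2$, hence $4 \mid t$ — impossible. So the triple is primitive; in particular one leg is even, so $ab/2 = tn^2/4$ is a positive integer, and since $t$ is squarefree this forces $n$ even, $n = 2n'$, whence the area $tn'^2$ has squarefree part exactly $t$.

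Finally I would check that the composites are identities. Starting from $(a,b,c)$, its image under \eqref{hypo} satisfies $tn^2 = 2ab$, so $s = \sqrt{c^2 + 2ab} = a+b$ and $d = \sqrt{c^2 - 2ab} = b - a$, and \eqref{line:reverse_map} then returns $\big((s-d)/2, (s+d)/2, m\big) = (a,b,c)$. Starting from $(m,n)$, its image under \eqref{line:reverse_map} satisfies $2ab = (s^2-d^2)/2 = tn^2$, so \eqref{hypo} returns $\big(c, \sqrt{2ab/t}\big) = (m, n)$. Hence the two maps are mutually inverse bijections. $\square$
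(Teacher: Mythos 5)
Your proof is correct and follows essentially the same strategy as the paper's: verify that each map lands in the stated target set and that the two composites are identities. The only differences are bookkeeping details --- you deduce integrality of $a,b$ from the parity of $\sqrt{m^2+tn^2}\pm\sqrt{m^2-tn^2}$ rather than the paper's $(2a,2b,2c) \bmod 4$ trick, you split the primitivity check into odd and even primes where the paper argues uniformly via $p\mid\gcd(m,n)$, and you are slightly more careful than the paper in confirming that $n$ is even and the integer area has squarefree part exactly $t$.
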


\begin{proof}
It is straightforward to show that the maps in~\eqref{hypo}
and~\eqref{line:reverse_map} are inverses to each other, so it remains only to
check that the functions take values in the indicated sets.

Let $(a,b,c) \in \mathbb{N}^3$ be a primitive Pythagorean triple with
squarefree part of the area $t$.
Considering $a^2 + b^2 \equiv c^2 \bmod 4$, we see that $c$ must be odd.
Let $(m,n)$ denote the image of $(a,b,c)$ through the map in~\eqref{hypo} and
note that $m^2-tn^2=(a-b)^2$, $m^2 +tn^2=(a+b)^2$, and that $m$ is odd.
We see also that $\gcd(m,n) = 1$, for if $p$ is a common odd prime divisor
of $m=c$ and $n = \sqrt{2ab/t}$ then $p\mid a$ or $p\mid b$, contradicting the
primitivity of $(a,b,c)$.

Conversely, fix a squarefree integer $t$ and consider coprime $(m,n) \in
\mathbb{N}^2$ such that  $m^2-tn^2$ and $m^2+tn^2$ are both squares, and let
$(a,b,c)$ denote the image of $(m,n)$ through the map
in~\eqref{line:reverse_map}.
One quickly verifies that $a^2 + b^2 = c^2$ and $ab/2 = t (n/2)^2$, so that
$(a,b,c)$ is (at least) a rational right triangle with an area with squarefree part $t$.
Clearly $(2a, 2b, 2c)$ is an integral right triangle with even
hypotenuse $2c$, and considering $(2a)^2 + (2b)^2 \equiv (2c)^2 \bmod 4$, one
sees that both $2a$ and $2b$ must be even integers.
It follows that $(a,b,c)$ is integral, and it remains only to show that it is
primitive.
If not, let $p$ be a common divisor of $a$ and $b$; then $p^2$ divides both $a^2 + b^2
= m^2$ and $2ab = tn^2$, so $p \mid \gcd(m,n) = 1$, a contradiction.
\end{proof}


We are now ready to relate $S_t(X)$ to a sum of inverse hypotenuse lengths.

\begin{lemma}\label{lem3}
  With notation as in Theorem~\ref{theorem:one}, we have
  \begin{equation*}
    S_t(X)
    =
    \sum_{\substack{h_i \leq X^{\frac{1}{2}}\\ h_i \in \mathcal{H}(t)}}
    \left\lfloor \frac{X^{\frac{1}{2}}}{h_i} \right\rfloor,
  \end{equation*}
  where $\lfloor \cdot \rfloor$ is the floor function.
\end{lemma}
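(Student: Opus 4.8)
The plan is to read $S_t(X)$ as a lattice-point count, strip off the factors $\tau(m)\tau(tn)$ to reach the coprime pairs of Lemma~\ref{lem:bijection}, and then count the admissible dilations. First I would observe that $\tau(m)=1$ forces $m=u^2$, and that $\tau(tn)=1$ together with $t$ squarefree forces $n=tv^2$: if $tn=w^2$, then squarefreeness of $t$ gives $t\mid w$, whence $n=t(w/t)^2$. Since $m,n\ge 1$, the integers $u,v\ge 1$ are uniquely determined, so
\[
  S_t(X)=\#\bigl\{(u,v)\in\mathbb{N}^2:\ u^2\le X,\ tv^2\le X,\ u^2+tv^2\ \text{and}\ u^2-tv^2\ \text{are squares}\bigr\}.
\]

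Next I would separate variables along the gcd: write $g=\gcd(u,v)$ and $(u,v)=(gu_0,gv_0)$ with $\gcd(u_0,v_0)=1$, which is a bijection from $\mathbb{N}^2$ onto $\{(g,u_0,v_0):g\ge 1,\ \gcd(u_0,v_0)=1\}$. Since $u^2\pm tv^2=g^2(u_0^2\pm tv_0^2)$, and scaling by the square $g^2$ both preserves and reflects the property of being a perfect square, the square conditions descend to ``$u_0^2+tv_0^2$ and $u_0^2-tv_0^2$ are squares'', which is exactly the hypothesis of Lemma~\ref{lem:bijection}. (The degenerate possibility $u_0^2-tv_0^2=0$ would force $t=1$ and $u_0=v_0=1$, hence $u_0^2+tv_0^2=2$, not a square; so it never occurs and there is no double counting.) Regrouping the count by $g$,
\[
  S_t(X)=\sum_{(u_0,v_0)}\#\{g\in\mathbb{N}:\ g^2u_0^2\le X,\ tg^2v_0^2\le X\},
\]
the outer sum running over coprime $(u_0,v_0)$ with $u_0^2\pm tv_0^2$ both squares. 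By Lemma~\ref{lem:bijection} these pairs are exactly the images $(c,\sqrt{2ab/t})$ of the primitive Pythagorean triples $(a,b,c)$, $a<b<c$, with squarefree part of the area $t$; thus $u_0=c=h_i$ and $tv_0^2=2ab$, and the inner count becomes $\#\{g\in\mathbb{N}:\ g\le X^{1/2}/h_i\ \text{and}\ g\le X^{1/2}/\sqrt{2ab}\}$.

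The final step is to identify the binding constraint, and here the key input is the elementary inequality $2ab\le a^2+b^2=c^2=h_i^2$, i.e.\ $\sqrt{2ab}\le h_i$. This makes the condition $n\le X$ redundant: $X^{1/2}/\sqrt{2ab}\ge X^{1/2}/h_i$, so the inner count is simply $\#\{g\in\mathbb{N}:g\le X^{1/2}/h_i\}=\lfloor X^{1/2}/h_i\rfloor$, which is $0$ unless $h_i\le X^{1/2}$. Summing over $\mathcal{H}(t)$ — equivalently, over the triples, via the bijection of Lemma~\ref{lem:bijection} — yields the claimed identity. I do not anticipate a serious obstacle: the care needed is in the bookkeeping of the gcd reduction (including the harmless degenerate case) and in noticing that $2ab\le c^2$ is precisely what renders the constraint $n\le X$ inactive, so that the hypotenuse bound alone controls the dilation count.
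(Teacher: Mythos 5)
Your proof is correct and follows essentially the same route as the paper: the paper packages the argument through an auxiliary sum $s_t(X)=\sum_{m,n\le X}\tau(m^2-tn^2)\tau(m^2+tn^2)$ with $s_t(X^{1/2})=S_t(X)$, but this is exactly your substitution $m=u^2$, $n=tv^2$ followed by the same gcd-reduction to the coprime pairs of Lemma~\ref{lem:bijection} and the same count of dilations. If anything, you are more explicit than the paper on two points it passes over quickly --- ruling out the degenerate case $u_0^2=tv_0^2$ and observing that $2ab\le c^2$ makes the constraint $n\le X$ redundant (the paper's ``$m>n$ in any contributing pair'') --- so no changes are needed.
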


\begin{proof}[Proof of lemma.]
Consider the sum
\begin{equation*}
  s_t(X):=\sum_{m = 1}^X \sum_{n = 1}^X \tau(m^2-tn^2) \tau(m^2+tn^2)
\end{equation*}
By Lemma~\ref{lem:bijection}, each coprime pair $(m,n)$ contributes a nonzero
term to the sum if and only if $m$ is the hypotenuse of a primitive right
triangle with an area with squarefree part $t$.
We note that there is potential multiplicity for triangles that have the same
hypotenuse but different area with the same squarefree part, and in such a case
these triangles would necessarily be dissimilar. 
Further, the coprime pair $(m,n)$ contributes to the sum if and only if
$(rm,rn)$ also contributes to the sum for all $r \in \mathbb{N}$.
Identifying $(m,n)$ with the hypotenuse $m=h_i$, and noting that $m > n$ in any
contributing pair $(m,n)$, we can rewrite the sum as
\begin{equation}\label{lemmasum}
  s_t(X)
  :=
  \sum_{\substack{h_i \leq X \\  h_i \in \mathcal{H}(t)}}
  \sum_{r \leq X/h_i} 1
  =
  \sum_{\substack{h_i \leq X \\  h_i \in \mathcal{H}(t)}}
  \left\lfloor \frac{X}{h_i} \right\rfloor.
\end{equation}
On the other hand, as $\tau(n) = 0$ for $n < 0$,
\begin{align}
 s_t(X)
 &=
 \sum_{m =1}^X \sum_{n =1}^X \tau(m^2-tn^2)\tau(m^2+tn^2)
 \\
 &=
 \sum_{m=1}^{X^2} \sum_{n=1}^{X^2} \tau(m-tn)\tau(m+tn)\tau(m)\tau(n) \notag
 \\
 &=
 \sum_{m = 1}^{X^2} \sum_{n=1}^{X^2}\tau(m-n)\tau(m+n)\tau(m)\tau(tn), \notag
\end{align}
so $s_t(X^{\frac{1}{2}}) = S_t(X)$.
With~\eqref{lemmasum}, this completes the proof of the lemma.
\end{proof}

\begin{lemma}\label{lem4}
  The integer $m$ is the hypotenuse of a primitive right triangle $T$ with
  squarefree part of the area $t$ if and only if $m$ is the numerator (after
  reducing the fraction) of the hypotenuse of the rational right triangle
  with area $t$ which is similar to $T$.
\end{lemma}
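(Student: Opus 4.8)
The plan is to make the relevant similarity completely explicit and then reduce the whole statement to a single coprimality fact. First I would fix $T=(a,b,c)$ with $a<b<c$; since the squarefree part of the area $ab/2$ equals $t$, I would write $ab/2 = tk^2$ for the unique $k\in\mathbb{N}$ (this makes sense because $ab/2$ is an integer, one leg of a primitive triple being even). Scaling $T$ by a factor $\lambda>0$ multiplies its area by $\lambda^2$, so the rational right triangle of area exactly $t$ that is similar to $T$ is unique and equals $(a/k,\,b/k,\,c/k)$, with hypotenuse $c/k$. Put in lowest terms, this fraction has numerator $c/\gcd(c,k)$, so the lemma comes down to showing $\gcd(c,k)=1$.

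To prove $\gcd(c,k)=1$, I would suppose a prime $p$ divides it. Then $p\mid k$, hence $p\mid k^2\mid 2tk^2=ab$; since $(a,b,c)$ is primitive we have $\gcd(a,b)=1$, so $p$ divides exactly one of $a,b$, say $p\mid a$. But then $p\mid c$ and $p\mid a$ force $p\mid b^2=c^2-a^2$, so $p\mid b$, contradicting $\gcd(a,b)=1$; the case $p\mid b$ is symmetric. (When $p=2$ the argument goes through unchanged, though in fact $c$ is odd, as noted in the proof of Lemma~\ref{lem:bijection}, so $2$ could not have divided $\gcd(c,k)$ to begin with.) Hence $\gcd(c,k)=1$, and the numerator of the reduced hypotenuse is exactly $c$.

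With this in hand the biconditional is purely formal: once $T$ is fixed, ``the hypotenuse of $T$'' names the integer $c$, while ``the numerator of the reduced hypotenuse of the rational right triangle of area $t$ similar to $T$'' names $c/\gcd(c,k)=c$; the two descriptions single out the same $m$, so $m$ satisfies one condition precisely when it satisfies the other.

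I do not expect a genuine obstacle here: the only step with content is $\gcd(c,k)=1$, and it is a two-line argument using primitivity and the Pythagorean relation. The mild points to be careful about are confirming that $k$ really is a positive integer (so that ``reducing $c/k$'' is meaningful) and recording the uniqueness of the similar rational triangle of prescribed area, so that the phrase ``the rational right triangle with area $t$ which is similar to $T$'' is well defined.
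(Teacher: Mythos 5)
Your proof is correct, and its computational core---showing that the hypotenuse $c$ is coprime to the scaling factor $k$ defined by $ab/2 = tk^2$, using primitivity---is the same coprimality argument the paper uses (the paper invokes pairwise coprimality of $(a,b,c)$ directly, whereas you rederive $\gcd(a,c)=\gcd(b,c)=1$ from $\gcd(a,b)=1$ via $b^2=c^2-a^2$; either is fine). The structural difference lies in the converse. You treat the biconditional as formal because $T$ appears on both sides of the statement; that is a legitimate reading and does prove the lemma as written. The paper instead proves the converse by starting from an arbitrary rational right triangle of area $t$ with reduced hypotenuse $m/n$, scaling by a rational $N$ to reach a primitive integral triangle, and using the squarefreeness of $t$ together with the integrality of areas of integer-sided right triangles to conclude that $N\in\mathbb{N}$ and then that $N = n$. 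That extra content---every rational right triangle of area $t$ becomes a primitive integral triangle upon multiplication by exactly the denominator of its reduced hypotenuse---is what the paper actually invokes later, when it takes a rational triangle $(\lvert a\rvert,\lvert b\rvert,\lvert c\rvert)$ arising from a point on $E_t$ and asserts that scaling by the denominator $d$ of $\lvert c\rvert$ yields a primitive integral triangle of area $td^2$. Your version recovers this only after one also checks that every rational right triangle of area $t$ is similar to some primitive integral triangle whose area has squarefree part $t$ (clear denominators, divide out the gcd, and use squarefreeness of $t$ to see the resulting area is $t$ times a perfect integer square); it would be worth adding that sentence if the lemma is to be applied as the paper applies it.
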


\begin{proof}[Proof of lemma.]
Recall that the area of a right triangle with integer sides is an integer, and
suppose that $m$ is the hypotenuse of a primitive right triangle $(a,b,m)$ with
area $ab/2=tn^2$ for some $n \in \mathbb{N}$.
Then $(\frac{a}{n},\frac{b}{n},\frac{m}{n})$ is a similar rational right
triangle with area $t$.
Further, $\gcd(m,n)=1$ since any prime factor of $n$ is a factor of $a$ or $b$,
and by primitivity $\gcd(a,m)=\gcd(b,m)=1$.

Conversely, let $\frac{m}{n}$ be the hypotenuse (with reduced terms) of a
rational right triangle $(R_1,R_2,\frac{m}{n})$ with area $R_1R_2/2=t$.
After scaling by a rational $N$, we find a similar primitive right triangle
$(a,b,m')$ with area $tN^2$.
Recalling again that the area of an integer-sided right triangle is an integer
and that $t$ is squarefree, we have that $N \in \mathbb{N}$ and, by the first part of this proof,
$(m',N)=1$. Since $\frac{m}{n}N=m'$, we have $\frac{m}{n}=\frac{m'}{N}$, and both fractions are reduced so $m=m'$. \end{proof}

\begin{remark}
  As a consequence of Lemmas~\ref{lem3} and~\ref{lem4}, we see that
  the correspondence in~\eqref{hypo} taking $(a,b,c) \to (m,n)$ is also
  the map that sends a primitive right triangle $(a,b,c)$ with squarefree part
  of the area $t$ to the coprime pair $(m,n)$, such that $m$ is the numerator
  and $n/2$ is the denominator of the hypotenuse of the similar rational
  right triangle with area $t$.
\end{remark}

\subsection*{Hypotenuses and Elliptic Curves}

There is a well-known one-to-one correspondence between rational triples
$(a,b,c)$ with $a^2+b^2=c^2$ and $ab/2 =t$, where $t$ is squarefree, and
$\mathbb{Q}$-rational points on the elliptic curve $E_t: \, y^2=x^3-t^2x$ where
$y \neq 0$, with maps
\begin{equation}\label{corr}
  \begin{split}
    (a,b,c)
    &\mapsto
    \left(\frac{tb}{c-a}, \frac{2t^2}{c-a} \right)=(x,y), \\
    (x,y)
    &\mapsto
    \left(\frac{x^2 - t^2}{y}, \frac{2tx}{y}, \frac{x^2 + t^2}{y}\right).
  \end{split}
\end{equation}
One can verify this correspondence through direct computation, or refer
to~\cite[\S4]{conrad} for further description and exposition.
We note that this family of triples will, up to change of order and sign, count
each rational right triangle with area $t$ exactly eight times:
\begin{equation}\label{alts}
  \begin{split}
    (a,b,c), \ (-a,-b,c), \ (a,b,-c), \ (-a,-b,-c), \\
    (b,a,c), \ (-b,-a,c), \ (b,a,-c), \ (-b,-a,-c).
 \end{split}
\end{equation}

The torsion subgroup of $E_t(\mathbb{Q})$ is isomorphic to
$\mathbb{Z}/2\mathbb{Z} \times \mathbb{Z}/2\mathbb{Z}$~\cite[Lemma~4.20]{knapp},
and the nontrivial torsion points are given explicitly by $T_1 = (0,0)$,
$T_2=(t,0)$ and $T_3=(-t,0)$.
These are precisely those points $(x,y)$ on $E_t(\mathbb{Q})$ with $y = 0$.
Every other point has infinite order.
It is also known that if the triple $(a,b,c)$ corresponds to the point $P$ on
$E_t(\mathbb{Q})$ then each of the alternate forms in~\eqref{alts} uniquely
corresponds to one of the following (though not necessarily respectively):
\begin{equation}\label{alts2}
  P, \  P+T_1, \ P+T_2, \ P+T_3, \ -P, \ -P+T_1, \ -P+T_2, \ -P+T_3.
\end{equation}

Let $(a,b,c)$ be one of the rational triples described above, so that
$(\lvert a \rvert,\lvert b \rvert,\lvert c \rvert)$ describes a rational right
triangle with area $t$ and hypotenuse $\lvert c \rvert$.
Let $d$ denote the denominator of $\lvert c \rvert$.
From Lemma~\ref{lem4}, $(\lvert a \rvert d,\lvert b \rvert d,\lvert c \rvert d)$
is a primitive Pythagorean right triangle with area $td^2$ and hypotenuse
$|c|d$.
The images of these two points under the map in~\eqref{corr}
agree and are given by
\begin{equation}\label{norm}
  \left(\frac{tb}{c-a}, \frac{2t^2}{c-a} \right)
  =
  \left(\frac{tbd}{cd-ad}, \frac{2t^2d}{cd-ad} \right).
\end{equation}

For $(x,y) \in E_t(\mathbb{Q})$ we define the height function,
\begin{equation*}
  H(x,y) := \max\{\lvert u \rvert ,\lvert v \rvert \}
\end{equation*}
where $x=\frac{u}{v}$ is the reduced fraction of $x$.
Then~\eqref{norm} implies that
\begin{equation}\label{lowbound}
  H\left(\frac{tb}{c-a}, \frac{2t^2}{c-a} \right)
  \leq \max
  \big\{
    t\lvert b \rvert d,\lvert c-a \rvert d
  \big\}
  \leq 2t\lvert c \rvert d.
\end{equation}

Let $h_i \in \mathcal{H}(t)$ correspond to the rational right triangle
$(a,b,c)$, and therefore to \emph{any} of the points $P_{i,j}$ on
$E_t(\mathbb{Q})$ where $j \in \{1,\ldots,8\}$ that correspond to each of the
alternate forms listed in~\eqref{alts2}.
Then $h_i = \lvert c \rvert d$ and by~\eqref{lowbound} we have that
\begin{equation*}
  h_i \geq \frac{1}{2t} H(P_{i,j}),
\end{equation*}
and in particular if $h_i \leq X/2t$ then necessarily $H(P_{i,j}) \leq X$.
Combining this with the one-to-one correspondence in~\eqref{corr} and the $8$-fold
multiplicity noted in~\eqref{alts} and~\eqref{alts2}, we see that
\begin{equation}\label{heightbound}
  \big\lvert\{ P \in E_t(\mathbb{Q}) : H(P) \leq X\}\big\rvert
  \geq
  8 \big\lvert
    \left\{ h_i \in \mathcal{H}(t) : h_i \leq X/2t \right\}
  \big\rvert.
\end{equation}

In his 1965 Annals paper, Andr\'{e} N\'{e}ron presented a theory \cite{neron} of the canonical height function in order to count the number of rational points of bounded height on an elliptic curve. Here we use a version of that result presented as Proposition~4.18 in Anthony Knapp's book  \cite{knapp} on elliptic curves.
\begin{theorem}
  Let $E$ be the elliptic curve $E: x^3 + Ax + B$ with $A,B \in \mathbb{Z}$.
  Suppose $E$ has rank $r$, and let $H(\cdot)$ be the height function defined
  above.
  Then as $X \to \infty$,
  \begin{equation*}
    \left|\{ P \in E(\mathbb{Q}) : H(P) \leq X\}\right|
    \begin{cases}
      = 
       |E(\mathbb{Q})_{\mathrm{tors}}| & \text{if } r = 0, \\
      \sim \dfrac{|E(\mathbb{Q})_{\mathrm{tors}}| \Vol(B_r)}
                 {R_{E/\mathbb{Q}}^{1/2}}
           (\log X)^{r/2}
           & \text{if } r \geq 1,
    \end{cases}
  \end{equation*}
  where $\sim$ means ``asymptotic to'',  $|E(\mathbb{Q})_{\mathrm{tors}}|$ is the
  size of the torsion subgroup, $\Vol(B_r)$ is the volume of the $r$-dimensional
  unit ball, and $R_{E/\mathbb{Q}}$ is the regulator of $E$ over $\mathbb{Q}$.

  In particular, for $E_t$, we have the simple bound
  \begin{equation*}
    |\{ P \in E(\mathbb{Q}) : H(P) \leq X\}|
    =
    O_t\big( (\log X)^{r/2} \big).
  \end{equation*}
\end{theorem}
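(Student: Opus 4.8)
The plan is to note first that this statement is essentially classical and may be quoted directly: it is \cite[Proposition~4.18]{knapp}, which rests on N\'eron's theory of canonical heights \cite{neron}, so one legitimate route is simply to cite it and then devote a single sentence to the specialization to $E_t$. If instead one wishes to reconstruct the argument, I would proceed as follows. First invoke the Mordell--Weil theorem to write $E(\mathbb{Q}) \cong E(\mathbb{Q})_{\mathrm{tors}} \oplus \mathbb{Z}^{r}$ and fix generators $P_1,\dots,P_r$ of a torsion complement. Next introduce the canonical (N\'eron--Tate) height $\widehat h(P) := \lim_{n\to\infty} 4^{-n}\log H(2^{n}P)$ (with $\log H(O) := 0$). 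A telescoping estimate based on the duplication bound $|\log H(2Q) - 4\log H(Q)| = O_E(1)$ shows that $\widehat h(P) = \log H(P) + O_E(1)$ uniformly on $E(\mathbb{Q})$, with implied constant depending only on $A$ and $B$; and the near-parallelogram inequality for $\log H(\cdot)$ shows that $\widehat h$ is a positive semidefinite quadratic form whose null space is exactly $E(\mathbb{Q})_{\mathrm{tors}} \otimes \mathbb{R}$. Hence $\widehat h$ descends to a positive definite quadratic form on the lattice $\Lambda := E(\mathbb{Q})/E(\mathbb{Q})_{\mathrm{tors}} \cong \mathbb{Z}^{r}$, whose Gram determinant on any basis is the regulator $R_{E/\mathbb{Q}}$ (in the normalization matching the statement).

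With these inputs I would reduce the count to lattice-point counting in an ellipsoid. Since $\widehat h(P) = \log H(P) + O_E(1)$, the set $\{P : H(P) \le X\}$ is trapped between $\{P : \widehat h(P) \le \log X - C\}$ and $\{P : \widehat h(P) \le \log X + C\}$ for a fixed $C = C_E$, and translating any free point by any of the $|E(\mathbb{Q})_{\mathrm{tors}}|$ torsion elements changes $\log H$ by only $O_E(1)$ (as $\widehat h$ depends only on the coset in $\Lambda$). A standard volume comparison then gives $\#\{v \in \Lambda : \widehat h(v) \le T\} = \Vol(B_r)\,R_{E/\mathbb{Q}}^{-1/2}\,T^{r/2} + o(T^{r/2})$ as $T \to \infty$, and the ``shell'' $\{T - C \le \widehat h(v) \le T + C\}$ contains only $O(T^{r/2-1}) = o(T^{r/2})$ lattice points, so the trapping costs nothing at leading order. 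Multiplying by the number of torsion cosets and taking $T \asymp \log X$ yields the asymptotic for $r \ge 1$. When $r = 0$ the group $E(\mathbb{Q})$ is finite, so for all sufficiently large $X$ every rational point satisfies $H(P) \le X$ and the count equals exactly $|E(\mathbb{Q})_{\mathrm{tors}}|$.

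The $E_t$ bound then follows at once: $|E_t(\mathbb{Q})_{\mathrm{tors}}| = 4$ (as recorded earlier) and $R_{E_t/\mathbb{Q}}$ is a positive real depending only on $t$, so in either case ($r = 0$ or $r \ge 1$) the count is $O_t\big((\log X)^{r/2}\big)$, with the convention $(\log X)^{0} = 1$.

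The step I expect to be the main obstacle is the height comparison $\widehat h(P) = \log H(P) + O_E(1)$, together with verifying that the boundary shell contributes only lower-order terms in the lattice-point count: these two carry all the genuine content of the N\'eron counting theorem, while everything else is bookkeeping with Mordell--Weil and a routine volume estimate. In the actual write-up I would most likely simply cite \cite[Proposition~4.18]{knapp} rather than reproduce this.
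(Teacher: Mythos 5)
Your proposal matches the paper exactly: the paper gives no proof of this theorem, attributing it to N\'eron's 1965 Annals paper and quoting it as Proposition~4.18 of Knapp's book, which is precisely the route you recommend taking in the actual write-up. Your supplementary sketch via the canonical height, the comparison $\widehat h = \log H + O_E(1)$, and lattice-point counting in the ellipsoid is the correct classical argument (the only nitpick being that the shell count is more safely bounded by $O(T^{(r-1)/2})$ than $O(T^{r/2-1})$, though either is $o(T^{r/2})$), and the specialization to $E_t$ is handled correctly.
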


From this theorem and~\eqref{heightbound}, we get the following corollary
bounding the number of hypotenuses.
\begin{corollary}\label{cor1}
  Let $\mathcal{H}(t)$ denote the set of hypotenuses of dissimilar Pythagorean
  right triangles with squarefree part of the area $t$.
  Then
  \begin{equation*}
    \big\lvert
      \left\{ h_i \in \mathcal{H}(t) : h_i \leq X/2t \right\}
    \big\rvert
    =
    O_t((\log X)^{r/2}),
  \end{equation*}
  where $r$ is the rank of the elliptic curve $E_t(\mathbb{Q})$.
\end{corollary}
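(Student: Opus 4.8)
The plan is to read the corollary off directly from the preceding Theorem together with the inequality \eqref{heightbound}, so there is little to do beyond bookkeeping. First I would invoke \eqref{heightbound}, which already records that
\[
  8\big\lvert\{h_i \in \mathcal{H}(t) : h_i \leq X/2t\}\big\rvert
  \leq
  \big\lvert\{P \in E_t(\mathbb{Q}) : H(P) \leq X\}\big\rvert .
\]
Next I would apply the N\'eron--Knapp Theorem to the curve $E_t : y^2 = x^3 - t^2 x$, which has integral coefficients $A = -t^2$ and $B = 0$ and whose rank is the integer $r$ appearing in Theorem~\ref{theorem:one}; this gives $\lvert\{P \in E_t(\mathbb{Q}) : H(P) \leq X\}\rvert = O_t\big((\log X)^{r/2}\big)$, the implied constant depending on $t$ only through the torsion size $|E_t(\mathbb{Q})_{\mathrm{tors}}| = 4$, the regulator $R_{E_t/\mathbb{Q}}$, and $\Vol(B_r)$. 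Dividing by the absolute constant $8$ does not affect the big-$O$ bound, and the corollary follows.

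The one point worth stating carefully is that the implied constant is permitted to depend on $t$: this is necessary since both the rank $r$ and the regulator of $E_t$ vary with $t$. When $r = 0$ the right-hand count is simply $|E_t(\mathbb{Q})_{\mathrm{tors}}| = 4$, which is $O_t(1) = O_t\big((\log X)^{0}\big)$, so the stated estimate holds in all cases. One could, if desired, make the dependence on $t$ fully explicit by tracking $t$ through the bound $h_i \geq \tfrac{1}{2t}H(P_{i,j})$ and through N\'eron's asymptotic, but this is unnecessary for what follows.

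There is no genuine obstacle here; the corollary is a formal consequence of the Theorem and \eqref{heightbound}. The substantive work — relating the integer hypotenuse $h_i = |c|d$ to the height of the associated point on $E_t$ via Lemma~\ref{lem4} and \eqref{norm}–\eqref{lowbound} — has already been carried out in deriving \eqref{heightbound}, and the qualitative bound $O_t((\log X)^{r/2})$ recorded here is precisely the input needed later to control the error term when summing $\lfloor X^{1/2}/h_i\rfloor$ over $h_i \in \mathcal{H}(t)$ as in Lemma~\ref{lem3}.
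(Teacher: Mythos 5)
Your proof is correct and follows exactly the route the paper takes: the corollary is obtained by combining the inequality \eqref{heightbound} with the N\'eron--Knapp point-counting theorem applied to $E_t$, and dividing by $8$. Your additional remarks on the $r=0$ case and the $t$-dependence of the implied constant are accurate but not needed beyond what the paper records.
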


We are now ready to complete the proof of the main theorem.
To simplify notation, take all sums below to be over $h_i \in \mathcal{H}(t)$.
From the corollary, we have that for $n \geq 0$,
\begin{equation*}
  \sum_{1 < \frac{h_i}{2^n X} \leq 2} \frac{1}{h_i}
  =
  O_t\left(\frac{(\log 2^{n+1}X)^{r/2}}{2^n X}\right).
\end{equation*}
Summing dyadically, we find that
\begin{equation}\label{diadic}
  \sum_{h_i > X} \frac{1}{h_i}
  =
  \sum_{n=0}^\infty \sum_{1 < \frac{h_i}{2^n X} \leq 2} \frac{1}{h_i}
  =
  O_t\left(\frac{(\log X)^{r/2}}{X}\right).
\end{equation}
Thus $C_t := \sum_{h_i} \frac{1}{h_i}$ is indeed convergent.

Beginning with Lemma~\ref{lem3}, we have
\begin{equation*}
  S_t(X)
  =
  \sum_{h_i \leq X^{\frac{1}{2}}}
  \left\lfloor \frac{X^{\frac{1}{2}}}{h_i} \right\rfloor
  =
  \sum_{h_i \leq X^{\frac{1}{2}}}
  \left(\frac{X^{\frac{1}{2}}}{h_i} + O(1) \right).
\end{equation*}
By Corollary~\ref{cor1}, the $O(1)$ error terms contribute no more than
$O_t((\log X)^{r/2})$, and by~\eqref{diadic} the sum over inverse hypotenuse
lengths is $C_t$ minus a rapidly convergent sum.
In particular,
\begin{align*}
  S_t(X)
  &=
  X^{\frac{1}{2}} \left(\sum_{h_i \leq X^{\frac{1}{2}}} \frac{1}{h_i}\right)
  +
  O_t((\log X)^{r/2})
  \\
  &= X^{\frac{1}{2}}\left( C_t-\sum_{h_i>X^{\frac{1}{2}}} \frac{1}{h_i}\right)
  +
  O_t((\log X)^{r/2})
  \\
  &=
  X^{\frac{1}{2}}
  \left( C_t-O_t\left(\frac{(\log X)^{r/2}}{X^{\frac{1}{2}}}\right)\right)
  +
  O_t((\log X)^{r/2})
  \\
  &=
  C_t X^{\frac{1}{2}} + O_t((\log X)^{r/2}).
\end{align*}
This completes the proof of Theorem~\ref{theorem:one}.

\section*{Acknowledgments}
The third author gratefully acknowleges support from EPSRC Programme Grant
EP/K034383/1 LMF:\ L-Functions and Modular Forms.

The authors would like to thank Asamoah Nkwanta of Morgan State University for a
conversation that inspired the authors to run explicit computations of $S_t(X)$
in SageMath which, in turn, led to the discovery of the main theorem of this
paper. The authors would also like to thank Joseph Silverman of Brown University for providing the authors with a more accurate attribution for Theorem 6. 

\bibliography{jobbib}
\bibliographystyle{abbrv}
\end{document}